\definecolor{webgreen}{rgb}{0,.5,0}
\definecolor{webbrown}{rgb}{.6,0,0}
\newcommand{\seqnum}[1]{\href{http://oeis.org/#1}{\underline{#1}}}
\begin{document}

\theoremstyle{plain}
\newtheorem{theorem}{Theorem}
\newtheorem{corollary}[theorem]{Corollary}
\newtheorem{lemma}[theorem]{Lemma}
\newtheorem{proposition}[theorem]{Proposition}

\theoremstyle{definition}
\newtheorem{definition}[theorem]{Definition}
\newtheorem{example}[theorem]{Example}
\newtheorem{conjecture}[theorem]{Conjecture}

\theoremstyle{remark}
\newtheorem{remark}[theorem]{Remark}

\newcommand\BD{\mathrm{B}}
\newcommand\SD{\mathrm{S}}
\newcommand{\N}{{\mathbb N}}
\newcommand{\Z}{{\mathbb Z}}
\newcommand{\C}{{\mathcal C}}
\newcommand{\A}{{\mathcal A}}
\newcommand{\F}{{\mathcal F}}

\begin{center}
\vskip 1cm{\LARGE\bf  Double-Recurrence Fibonacci Numbers and Generalizations\\
\vskip 1cm}
\large
Ana Paula Chaves\\
Instituto de Matem\'atica e Estat\'istica\\
Universidade Federal de Goi\'as\\
\href{mailto:apchaves@ufg.br}{\tt apchaves@ufg.br} \\
\ \\
Carlos Alirio Rico Acevedo\\
Departamento de Matem\'atica \\
Universidade de Bras\'ilia \\
\href{alirio@mat.unb.br}{\tt alirio@mat.unb.br}
\ \\
Brazil
\end{center}

\vskip .2 in

\begin{abstract}
Let $(F_n)_{n\geq 0}$ be the Fibonacci sequence given by the recurrence $F_{n+2}=F_{n+1}+F_n$, for $n\geq 0$, where $F_0=0$ and $F_1=1$. There are several generalizations of this sequence and also several interesting identities. In this paper, we investigate a homogeneous recurrence relation that, in a way, extends the linear recurrence of the Fibonacci sequence for two variables, called {\it double-recurrence Fibonacci numbers}, given by ${F(m,n) = F(m-1, n-1)+F (m-2, n-2)}$, for $n,m\geq 2$, where $F (m, 0) = F_m$, $F (m, 1) = F_{m+1}$, $F (0, n) = F_n$ and $F (1, n) = F_{n+1}$. We exhibit a formula to calculate the values of this double recurrence, only in terms of Fibonacci numbers, such as certain identities for their sums are outlined. Finally, a general case is  studied.
\end{abstract}

\section{Introduction}
Fibonacci numbers are known for their amazing properties, association with geometric figures, among others \cite{fib-pos, kalman}. Using the usual notation for such numbers, $(F_n)_{n\geq0}$, they are given by the following linear recurrence of order two: $F_{n+2}=F_{n+1}+F_n$, for $n\geq 0$, where $F_0=0$ and $F_1=1$. The Fibonacci sequence has been generalized in many ways, some by preserving the initial conditions, and others by studying high order recurrences with similar initial conditions \cite{miller,miles}. 

Our interest relies in a generalization that uses a recurrence for two indices (called a {\it double-recurrence}), such as the one studied by Hosoya \cite{hosoya}, who defined a set of integers $\{f_{m,n}\}$ satisfying:
\[
f_{m,n} = f_{m-1,n} + f_{m-2,n},
\]
\[
f_{m,n} = f_{m-1,n-1} + f_{m-2,n-2},
\]
for all $m\geq 2$, $m\geq n \geq 0$, where
\[
f_{0,0} = f_{1,0}=f_{1,1}=f_{2,1}=1 \ .
\]
Those numbers, when arranged triangularly, provide the famous {\it Fibonacci Triangle} (also known as  {\it Hosoya's Triangle}). One of our goals is to construct an analogue of the Fibonacci Triangle, studying a similar double-recurrence.
The set of numbers $\{ F(m,n) \}$, will be required to satisfy the following,
\begin{equation}\label{dfibo}
F(m,n)=F(m-1,n-1)+F(m-2,n-2),  \mbox{ for } \ m,n \geq 2,
\end{equation}
with initial values
\begin{center}
$\begin{array}{l l}
 F(m,0)=F_{m},    &  F(1,n)=F_{n+1},  \\
 F(m,1)=F_{m+1},   &  F(0,n)=F_{n} \  . 
\end{array} $
\end{center}

The initial conditions above, along with (\ref{dfibo}), are sufficient to calculate the value of $F(m,n)$ at each $(m,n) \in \mathbb{N}^2$. We call the values of the set $\{F(m,n)\}$,  {\it double-recurrence Fibonacci numbers}. Note that $F(m,n)$ is a symmetric function, since the initial conditions above and below the main diagonal are the same, and that $F(k,i) = F(k,k-i)$ for all $0 \leq i \leq \lfloor k/2 \rfloor $. Figure 1, displays a few values for $F(m,n)$, considering the bottom left corner as the origin $(0,0)$, and the $(m,n)$ coordinate having the value for $F(m,n)$.

Consider the value of the coordinate $(7,4)$, given by $F(7,4)=19$, and then draw a parallel to the antidiagonal from this point towards the axis, where the interactions begin with initial values $F(3,0)=F_3$ and $F(4,1)=F_5$. This means that, in order to determine $F(7,4)$, we only needed the pair $F_3$ and $F_5$, in other words, only Fibonacci numbers. The following proposition, asserts that this property is true for all $F(m,n)$, meaning that these values can be obtained using only Fibonacci numbers.

\begin{center}
\begin{pspicture*}(-0.5,-0.5)(8,8)
\psgrid[subgriddiv=1,griddots=7,gridlabels=0pt](0,0)(7,7)
\rput (0,7){13} \rput (1,7){21} \rput (2,7) {18} \rput (3,7)  {19} \rput (4,7)  {19} \rput (5,7)  {18} \rput (6,7)  {21} \rput (7,7)  {13}
\rput (0,6) {8} \rput (1,6) {13} \rput (2,6) {11} \rput (3,6)  {12} \rput (4,6)  {11} \rput (5,6)  {13} \rput (6,6)  {8} \rput (7,6)  {21}
\rput (0,5) {5} \rput (1,5)  {8} \rput (2,5)  {7} \rput (3,5)  {7} \rput (4,5)  {8} \rput (5,5)  {5} \rput (6,5)  {13}   \rput (7,5)  {18}
\rput (0,4) {3} \rput (1,4)  {5} \rput (2,4)  {4} \rput (3,4)  {5} \rput (4,4)  {3} \rput (5,4)  {8} \rput (6,4)  {11} \rput (7,4)  {19}
\rput (0,3) {2} \rput (1,3)  {3} \rput (2,3)  {3} \rput (3,3)  {2} \rput (4,3)  {5} \rput (5,3)  {7} \rput (6,3)  {12}  \rput (7,3)  {19}
\rput (0,2) {1} \rput (1,2)  {2} \rput (2,2)  {1} \rput (3,2)  {3} \rput (4,2)  {4} \rput (5,2) {7} \rput (6,2)  {11}   \rput (7,2)  {18}
\rput (0,1) {1} \rput (1,1)  {1} \rput (2,1)  {2} \rput (3,1)  {3} \rput (4,1)  {5} \rput (5,1)  {8} \rput (6,1)  {13}  \rput (7,1)  {21}
\rput (0,0) {0} \rput (1,0)  {1} \rput (2,0)  {1} \rput (3,0)  {2} \rput (4,0)  {3} \rput (5,0)  {5} \rput (6,0)  {8} \rput (7,0)  {13}
\end{pspicture*}
\captionof{figure}{Double-Fibonacci Numbers}
\end{center}

\begin{proposition}\label{fffibo}
Let \ $m,n  \in \mathbb{N}$, and $F(m,n)$ be a double-recurrence Fibonacci number, with  $k:= \min \{m,n\}$. Then,
\begin{equation}\label{ffibo}
F(m,n)=F_{k}F_{\vert m-n \vert +2}+F_{k-1}F_{\vert m-n \vert}.
\end{equation}
\end{proposition}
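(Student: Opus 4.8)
The plan is to exploit the key structural feature of $(\ref{dfibo})$: the recurrence never changes the difference of its two arguments, since $(m-1,n-1)$ and $(m-2,n-2)$ both have coordinate difference $m-n$. So I would restrict $F$ to a single antidiagonal and recognize an ordinary second-order recurrence there. Since the initial data are symmetric, $F(m,n)=F(n,m)$, and both $\min\{m,n\}$ and $|m-n|$ are symmetric in $m,n$, so it is enough to prove $(\ref{ffibo})$ when $m\ge n$; put $k=n$ and $d=m-n=|m-n|\ge 0$.

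Next, fix $d$ and set $a_j:=F(d+j,j)$ for $j\ge 0$. When $j\ge 2$ both $d+j\ge 2$ and $j\ge 2$ hold, so $(\ref{dfibo})$ applies and yields
\[
a_j=F(d+j-1,j-1)+F(d+j-2,j-2)=a_{j-1}+a_{j-2},
\]
while the prescribed initial values give $a_0=F(d,0)=F_d$ and $a_1=F(d+1,1)=F_{d+2}$. Now define $b_j:=F_dF_{j-1}+F_{d+2}F_j$, using the convention $F_{-1}=1$. Each of $F_{j-1}$ and $F_j$ satisfies the Fibonacci recurrence, hence so does $b_j$, and one checks $b_0=F_d$, $b_1=F_{d+2}$. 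By uniqueness of the solution of a second-order linear recurrence with two given initial terms, $a_j=b_j$ for all $j\ge 0$. Specializing to $j=n=k$ and commuting the products gives
\[
F(m,n)=F_nF_{|m-n|+2}+F_{n-1}F_{|m-n|},
\]
which is exactly $(\ref{ffibo})$.

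As a consistency check I would finally test the two degenerate slices directly: for $n=0$ the right-hand side of $(\ref{ffibo})$ is $F_0F_{m+2}+F_{-1}F_m=F_m$ (where the convention $F_{-1}=1$ is exactly what makes this work), and for $n=1$ it is $F_1F_{m+1}+F_0F_{m-1}=F_{m+1}$, both agreeing with the initial conditions. I do not expect a genuine obstacle here; once the antidiagonal slice $a_j$ is identified, the result reduces to the elementary fact that a Fibonacci-type sequence is the fixed linear combination $G_0F_{j-1}+G_1F_j$ of shifted Fibonacci numbers. The only points that need a little care are the symmetry reduction to $m\ge n$ and the bookkeeping around $F_{-1}=1$.
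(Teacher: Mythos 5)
Your argument is correct, but it takes a genuinely different route from the paper's proof. The paper proves Proposition \ref{fffibo} by a direct two-variable induction: it writes $F(m+1,n)=F(m,n-1)+F(m-1,n-2)$, applies the inductive hypothesis to both terms with $k'=\min\{m,n-1\}$, and collects the Fibonacci products, carrying out the bookkeeping with $|m-n+1|$ and $k'$ explicitly (the case $F(m,n+1)$ being treated ``analogously''). You instead exploit the fact that the recurrence (\ref{dfibo}) preserves the difference of the indices: fixing $d=m-n\geq 0$, the slice $a_j=F(d+j,j)$ satisfies the ordinary Fibonacci recurrence with $a_0=F_d$, $a_1=F_{d+2}$, so by uniqueness (equivalently, by the GFS representation of Theorem \ref{GFS}) $a_j=F_dF_{j-1}+F_{d+2}F_j$, which is exactly (\ref{ffibo}). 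What your organization buys is a cleaner argument --- no two-index induction, no $\min$/absolute-value bookkeeping --- and it is precisely the mechanism the paper itself invokes later in Proposition \ref{HFiboab}, where $H(m,n)=F_nH_1^2(m-n+1)+F_{n-1}H_1(m-n)$ is asserted ``similarly to Proposition \ref{fffibo}''; your slicing argument is in effect the justification of that step, so it generalizes immediately to spin functions. Two small points deserve a line each in a write-up: the symmetry $F(m,n)=F(n,m)$ you use to reduce to $m\geq n$ needs its own short induction (or simply note that the mirrored slice $F(j,d+j)$ has the same initial values $F_d,F_{d+2}$, so symmetry is not needed at all), and the convention $F_{-1}=1$ should be stated explicitly, since the formula with $k=0$ requires it --- the paper tacitly uses it as well in its base case $F(0,0)=F_0F_2+F_{-1}F_0$.
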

\begin{proof}
We proceed by the induction principle for two variables. It Is straightforward that $F(0,0) = F_0 = F_0F_2+F_{-1}F_0$. So, supposing that (\ref{ffibo}) holds for all $\ i \leq m$ and $j \leq n$, we have
\begin{align*}
F(m+1,n) &= F(m,n-1) + F(m-1,n-2) \\
 & =  F_{k'}F_{\vert m-n+1 \vert +2}+F_{k'-1}F_{\vert m-n+1 \vert}  + \ F_{k'-1}F_{\vert m-n+1 \vert +2}+F_{k'-2}F_{\vert m-n+1 \vert},
\end{align*}
where $k'= \min \{m,n-1\} \Rightarrow k'-1= \min \{m-1,n-2\}$. Therefore,
\[
F(m+1,n)  =  F_{k'+1}F_{\vert (m+1)-n \vert +2}+F_{k'}F_{\vert (m+1)-n\vert},
\]
and since $k'+1 = \min \{m+1,n\}$, the identity holds in this case. Analogously, following the same steps, the identity also holds for $F(m,n+1)$, which completes the proof.  
\end{proof}

In the homogeneous double-recurrence (\ref{dfibo}), one could replace the initial conditions by a general linear recurrence sequence of order two, or even arithmetic functions. In other words, we have the following:

\begin{definition} \label{spinfunctiondef}
Let $m,n \in \mathbb{N}$. The function $H(m,n)$ satisfying
 \begin{equation}
H(m,n)=H(m-1,n-1)+H(m-2,n-2) \; \; 
 \label{Hfibo}
 \end{equation}
for all $ m,n \geq 2$, where the following initial conditions are given
\begin{center}
$\begin{array}{ll}
H(m,0)=H_{1}(m), \; \; & \;  \; H(0,n)=H_{2}(n),\\
H(m,1)=H_{1}^{2}(m), \; \; & \; \;  H(1,n)=H_{2}^{1}(n),
\end{array}$
\end{center}
with $H_1$, $H_2$, $H_1^2$ and $H_2^1$ arithmetic functions, is called a \textnormal{double-recurrence function}.  If $H_1$, $H_2$, $H_1^2$ and $H_2^1$ are linear recurrence sequences of order two, the function satisfying (\ref{Hfibo}) is called a  \textnormal{spin Function.}
\end{definition}

In this way, double-recurrence Fibonacci numbers are values of a spin Function, such as every Fibonacci and Lucas numbers.
Now, let $H(m,n)$ be a spin Function, where 
\begin{eqnarray}
 \nonumber H(m,0)=H_{1}(m) & \mbox{ with } & H_{1}(0)=a \ \mbox{ and } \ H_{1}(1)=b, \\
 \nonumber H(m,1)=H_{1}^{2}(m) & \mbox{ with } & H_{1}^{2}(0)=d \ \mbox{ and } \ H_{1}^{2}(1)=c, \\
 H(0,n)=H_{2}(n) & \mbox{ with } & H_{2}(0)=a \ \mbox{ and } \ H_{2}(1)=b, \\
 \nonumber H(1,n)=H_{2}^{1}(n) & \mbox{ with } & H_{2}^{1}(0)=d \ \mbox{ and } \ H_{2}^{1}(1)=c,
\label{conind}
\end{eqnarray}
and if $m=n$, we have a linear recurrence sequence of order two, given by:
\begin{equation}\label{cond}   
H(m,m)=H_{1}^{1}(m), \mbox{ with } H_{1}^{1}(0)=a \ \mbox{ and } \ H_{1}^{1}(1)=c.
\end{equation}

The motivation for the term {\it spin function}, relies on the way that we can reach, from the initial terms, all pairs of $(m,n) \in \mathbb{N}^2$, where the function is evaluated, using every secondary diagonal on it, that we refer as {\it strings}. A graphical representation of it, can be seen next.

\begin{multicols}{2}
\begin{center}
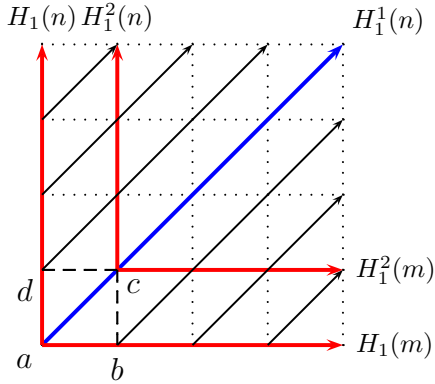

\begin{pspicture*}(-1,-1)(6,6)
\psgrid[subgriddiv=1,griddots=7,gridlabels=0pt](0,0)(4,4)
   \psline[linewidth=1.5pt,linecolor=blue]{->}(0,0)(4,4)
   \psline[linewidth=1.5pt,linecolor=red]{->}(0,0)(0,4)
   \psline[linewidth=1.5pt,linecolor=red]{->}(0,0)(4,0)
   \psline[linewidth=1.5pt,linecolor=red]{->}(1,1)(4,1)
   \psline[linewidth=1.5pt,linecolor=red]{->}(1,1)(1,4)
   \psline[linewidth=0.75pt]{->}(1,0)(4,3)
   \psline[linewidth=0.75pt]{->}(2,0)(4,2)
   \psline[linewidth=0.75pt]{->}(3,0)(4,1)
   \psline[linewidth=0.75pt]{->}(0,1)(3,4)
   \psline[linewidth=0.75pt]{->}(0,2)(2,4)
   \psline[linewidth=0.75pt]{->}(0,3)(1,4)
       \uput[0](4,0){\footnotesize $H_{1}(m)$}
       \uput[225](0,0){$a$}
       \uput[270](1,0){$b$}
   \psline[linestyle=dashed](0,1)(1,1)
       \uput[0](4,1){\footnotesize $H_{1}^{2}(m)$}
       \uput[-45](1,1){$c$}
       \uput[225](0,1){$d$}
   \psline[linestyle=dashed](1,0)(1,1)
       \uput[90](0,4){\footnotesize $H_{1}(n)$}
       \uput[90](1,4){\footnotesize $H_{1}^{2}(n)$}
       \uput[45](4,4){\footnotesize $H_{1}^{1}(n)$}
\end{pspicture*}
\captionof{figure}{A spin Function and its strings}
\end{center}

\begin{center}
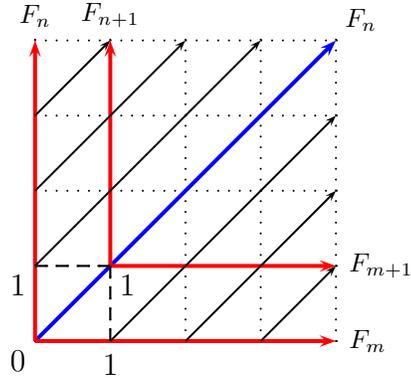

\begin{pspicture*}(-1,-1)(6,6)
\psgrid[subgriddiv=1,griddots=7,gridlabels=0pt](0,0)(4,4)
   \psline[linewidth=1.5pt,linecolor=blue]{->}(0,0)(4,4)
   \psline[linewidth=1.5pt,linecolor=red]{->}(0,0)(0,4)
   \psline[linewidth=1.5pt,linecolor=red]{->}(0,0)(4,0)
   \psline[linewidth=1.5pt,linecolor=red]{->}(1,1)(4,1)
   \psline[linewidth=1.5pt,linecolor=red]{->}(1,1)(1,4)
   \psline[linewidth=0.75pt]{->}(1,0)(4,3)
   \psline[linewidth=0.75pt]{->}(2,0)(4,2)
   \psline[linewidth=0.75pt]{->}(3,0)(4,1)
   \psline[linewidth=0.75pt]{->}(0,1)(3,4)
   \psline[linewidth=0.75pt]{->}(0,2)(2,4)
   \psline[linewidth=0.75pt]{->}(0,3)(1,4)
       \uput[0](4,0){\footnotesize $F_{m}$}
       \uput[225](0,0){$0$}
       \uput[270](1,0){$1$}
   \psline[linestyle=dashed](0,1)(1,1)
       \uput[0](4,1){\footnotesize $F_{m+1}$}
       \uput[225](0,1){$1$}
       \uput[-45](1,1){$1$}
   \psline[linestyle=dashed](1,0)(1,1)
       \uput[90](0,4){\footnotesize $F_{n}$}
       \uput[90](1,4){\footnotesize $F_{n+1}$}
       \uput[45](4,4){\footnotesize $F_{n}$}
  \end{pspicture*}
\captionof{figure}{Double-recurrence Fibonacci function}
\end{center}
\end{multicols}
\section{Properties and Identities}

Among several generalizations for Fibonacci numbers, we now consider the ones that satisfies the Fibonacci recurrence relation, but with arbitrary initial conditions.

\begin{definition}
Let $(G_n)_n$ a linear recurrence sequence of order two, where $G_1=a$, $G_2=b$ and $G_{n+2}=G_{n+1}+G_n$, $n \geq 1$. The ensuing sequence is called a \textnormal{generalized Fibonacci sequence (GFS)}.
\end{definition}

The following, is a classical result, that can be easily proved by induction, which states that every term on a GFS, can be written only in terms of Fibonacci numbers and their initial conditions.

\begin{theorem}\label{GFS}
Let $G_n$ denote the \textnormal{n}th term of the GFS. Then $G_{n+2}=bF_{n+1}+aF_{n}$, $n \geq 1$. 
\end{theorem}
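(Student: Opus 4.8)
The plan is to argue by induction on $n$, exploiting that both $(G_n)_n$ and the Fibonacci sequence $(F_n)_n$ obey the same order-two recurrence $x_{n+2}=x_{n+1}+x_n$. Because that recurrence ties together three consecutive terms, I would verify two consecutive base cases. For $n=1$, the definition of the GFS gives $G_3=G_2+G_1=b+a$, which matches $bF_2+aF_1=b\cdot 1+a\cdot 1$ since $F_1=F_2=1$. For $n=2$, $G_4=G_3+G_2=(a+b)+b=a+2b$, which matches $bF_3+aF_2=2b+a$.

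For the inductive step, I would fix $k\geq 2$, assume the formula for the two indices $n=k-1$ and $n=k$, namely $G_{k+1}=bF_k+aF_{k-1}$ and $G_{k+2}=bF_{k+1}+aF_k$, add them, and regroup:
\[
G_{k+3}=G_{k+2}+G_{k+1}=b(F_{k+1}+F_k)+a(F_k+F_{k-1})=bF_{k+2}+aF_{k+1},
\]
where the final equality is simply the Fibonacci recurrence $F_{j+2}=F_{j+1}+F_j$. This is exactly the claimed identity for $n=k+1$, which closes the induction.

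The argument presents no genuine obstacle; the only point demanding care is the index bookkeeping — keeping the shift in the statement ($G_{n+2}$ against $F_{n+1},F_n$) consistent across both base cases and the inductive step, and using the values $F_1=F_2=1$ (rather than $F_0=0$, $F_1=1$) when evaluating the smallest cases. Alternatively, one could bypass induction by solving the recurrence through the Binet-type closed form, expressing $G_n$ as a linear combination of the two roots of $x^2=x+1$ and fixing the coefficients from $G_1=a$, $G_2=b$; but the inductive proof is shorter and entirely self-contained.
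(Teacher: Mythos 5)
Your induction argument is correct: the base cases $n=1,2$ and the two-step inductive hypothesis are handled with the right index bookkeeping, and together they cover all $n\geq 1$. The paper itself gives no written proof, simply citing Koshy (Theorem 7.1) after remarking that the result is easily proved by induction, which is exactly the route you carried out.
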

\begin{proof}
 See \cite[Th. 7.1]{kos-f}.
\end{proof}

Note that, Proposition \ref{fffibo} can be seen as a generalization of Theorem \ref{GFS} for double-recurrence Fibonacci numbers. Our immediate purpose is to show that an analogous result also holds for spin functions. In order to do so, we introduce a double-recurrence function that will play the same role as Fibonacci numbers on Theorem \ref{GFS}. Let $m,n,a, b \in \mathbb{N}$. Then, define
\begin{equation}
F_{a}^{b}(m,n) := bF_{n}F_{\vert m-n \vert +2}+aF_{n-1}F_{\vert m-n \vert} \ .
\label{Fab}
\end{equation}

It is easy to see that $F_a^b(m,n)$ is a double-recurrence function, but not necessarily a spin Function, i.e., 
\[F_a^b(m+2,n+2) = F_a^b(m+1,n+1) + F_a^b(m,n), \] but the functions on the initial conditions are not necessarily linear recurrence sequences of order two. For that, we have the following result.

\begin{proposition}
Let $m,n \in \mathbb{N}$ and the spin function $H(m,n)$, such as on Definition \ref{spinfunctiondef}. Then,
\renewcommand{\labelenumi}{\roman{enumi}.}
\begin{enumerate}
\item{If $n \leq m-1$, then $H(m,n)=F_{a+b}^{c}(m-1,n)+F_{b}^{d}(m-2,n)$.}
\item{If $ m-1 < n$, then $H(m,n)=F_{a+d}^{c}(n-1,m)+F_{d}^{b}(n-2,m)$.}
\end{enumerate}
\label{HFiboab}
\end{proposition}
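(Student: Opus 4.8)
The plan is to reduce the claim about spin functions to two applications of the generalized-Fibonacci formula \eqref{ffibo} (via the GFS result, Theorem \ref{GFS}), combined with the fact that $F_a^b(m,n)$ satisfies the same homogeneous double-recurrence \eqref{Hfibo}. The key observation is that both sides of each identity in Proposition \ref{HFiboab} satisfy \eqref{Hfibo}; the right-hand side does so because $F_a^b$ is a double-recurrence function (as already noted in the excerpt) and a sum of two such functions is again one. So it suffices to check that the two sides agree on a set of initial data sufficient to pin down the solution of \eqref{Hfibo} — concretely, along two adjacent strings (secondary diagonals) whose union meets every antidiagonal, for instance the strings through $(m,0)$ and $(m,1)$ in case (i), and through $(0,n)$ and $(1,n)$ in case (ii).

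First I would treat case (i), $n \le m-1$. Fix the antidiagonal, i.e. the value $r := m-n \ge 1$, and consider the one-variable sequence $g_j := H(j+r, j)$ for $j \ge 0$. By the recurrence \eqref{Hfibo}, $g_{j+2} = g_{j+1} + g_j$, so $(g_j)$ is a GFS once we identify its first two terms. From the initial conditions \eqref{conind} we have $g_0 = H(r,0) = H_1(r)$ and $g_1 = H(r+1,1) = H_1^2(r+1)$, and since $H_1, H_1^2$ are themselves order-two Fibonacci-type recurrences with $H_1(0)=a$, $H_1(1)=b$, $H_1^2(0)=d$, $H_1^2(1)=c$, Theorem \ref{GFS} gives closed forms $H_1(r) = bF_{r-1}+aF_{r-2}$ and $H_1^2(r+1) = cF_{r-1}+dF_{r-2}$ for $r \ge$ the relevant range (adjusting indices/small cases as needed). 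Applying Theorem \ref{GFS} once more to the GFS $(g_j)$ yields $H(m,n) = g_n = g_1 F_{n-1} + g_0 F_{n-2}$, and substituting the two closed forms above and regrouping the terms by which Fibonacci-product they carry is exactly $F_{a+b}^{c}(m-1,n) + F_b^{d}(m-2,n)$ after unwinding definition \eqref{Fab}: the $F_{a+b}^c(m-1,n)$ piece collects $c F_n F_{|m-1-n|+2} + (a+b)F_{n-1}F_{|m-1-n|}$ and similarly for the other. Case (ii) is handled identically after exchanging the roles of $m$ and $n$, which is legitimate because the setup \eqref{conind} is symmetric in the two coordinate directions ($H(0,n)$ and $H(1,n)$ have the same initial data $a,b$ and $d,c$ as $H(m,0)$ and $H(m,1)$); here one runs the argument along strings anchored on the $n$-axis.

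The main obstacle I anticipate is purely bookkeeping: matching the Fibonacci indices in the two-step application of Theorem \ref{GFS} against the $|m-n|$-shifted indices appearing in definition \eqref{Fab}, and carefully handling the low-index and boundary cases (small $m-n$, the diagonal $m=n$ governed separately by \eqref{cond}, and negative-index Fibonacci values such as $F_{-1}$, already used tacitly in the proof of Proposition \ref{fffibo}). A clean way to avoid a case explosion is to first verify the two identities directly on the two initializing strings — this is a finite computation using \eqref{conind}, Theorem \ref{GFS}, and \eqref{Fab} — and then invoke the uniqueness of solutions of \eqref{Hfibo} given data on two adjacent strings, rather than re-deriving the closed form from scratch on every antidiagonal. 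Either route works; the uniqueness route is shorter to write and makes the structural reason transparent, namely that $F_{a+b}^c$ and $F_b^d$ are engineered so that their boundary strings reproduce the GFS decompositions of $H_1, H_1^2$ (resp. $H_2, H_2^1$).
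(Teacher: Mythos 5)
Your main line of attack is the paper's own: fix the antidiagonal $r=m-n$, observe that $g_j:=H(j+r,j)$ is a GFS, and apply Theorem \ref{GFS} twice (the paper phrases the first application as ``similarly to Proposition \ref{fffibo}'', getting $H(m,n)=F_nH_1^2(m-n+1)+F_{n-1}H_1(m-n)$, and then substitutes the closed forms of $H_1^2$ and $H_1$). Two caveats on your execution of case (i). First, your indices are off by one throughout: the correct identities are $g_n=g_1F_n+g_0F_{n-1}$, $H_1(r)=bF_r+aF_{r-1}$, $H_1^2(r+1)=cF_{r+1}+dF_r$ (with $F_{-1}=1$); you flag this as bookkeeping, and it is, but as written your substitution does not produce the four products that appear in $F_{a+b}^c(m-1,n)+F_b^d(m-2,n)$. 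Second, ``regrouping the terms by which Fibonacci-product they carry'' is not quite enough: the direct substitution yields $cF_nF_{r+1}+dF_nF_r+bF_{n-1}F_r+aF_{n-1}F_{r-1}$, and to reach the target you must additionally split $bF_{n-1}F_r=bF_{n-1}F_{r-1}+bF_{n-1}F_{r-2}$ so that the $b$-piece joins $a$ to form the coefficient $a+b$; this one-line use of the Fibonacci recurrence is exactly the step the paper makes explicit, and your sketch omits it.

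The genuine gap is case (ii). You claim it follows ``identically after exchanging the roles of $m$ and $n$'' because the boundary data are symmetric, reading \eqref{conind} as giving $H(0,n),H(1,n)$ the same initial values $a,b$ and $d,c$ as $H(m,0),H(m,1)$. With that reading, the mirrored argument proves $H(m,n)=F_{a+b}^{c}(n-1,m)+F_{b}^{d}(n-2,m)$, which is \emph{not} statement (ii): the stated formula has $a+d$ and $F_d^b$, i.e., the roles of $b$ and $d$ interchange. (Note that \eqref{conind} as printed is internally inconsistent --- it forces $H(1,0)$ to equal both $H_1(1)=b$ and $H_2^1(0)=d$; the intended conditions, visible in Figures~2--4, are $H(1,0)=b$, $H(0,1)=d$, so the column functions have initial values $(a,d)$ and $(b,c)$.) With the intended data, swapping $m\leftrightarrow n$ necessarily swaps $b\leftrightarrow d$, and rerunning your case (i) computation along strings anchored on the $n$-axis gives exactly $F_{a+d}^{c}(n-1,m)+F_{d}^{b}(n-2,m)$. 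So to close case (ii) you must either correct the initial data you quote or carry out the mirrored computation explicitly rather than appeal to a symmetry that, with the data as you state them, proves a different identity. Your alternative ``uniqueness on two adjacent strings'' packaging is fine in principle and a bit cleaner than re-deriving the closed form, but it inherits the same two obligations: correct boundary values and the index-accurate finite check on the initializing strings.
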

\begin{proof}

Let  $H(m, n)$ be a spin function for $n\leq m-1$, with functions $H_{1}^{2}$ and $H_{1}$ given by the initial conditions described previously. Similarly to the Proposition \ref{fffibo}, we have
\[H(m,n)=F_{n}H_{1}^{2}(m-n+1)+F_{n-1}H_{1}(m-n),\]
and since $H_{1}^{2}$ and $H_{1}$ are linear recurrence sequences, using 
Theorem \ref{GFS}, we get
\begin{align*}
H(m,n)& = F_{n}(cF_{m-n+1}+dF_{m-n})+F_{n-1}(bF_{m-n}+aF_{m-n-1})\\
& = cF_{n}F_{m-n+1}+aF_{n-1}F_{m-n-1}+dF_{n}F_{m-n}+bF_{n-1}F_{m-n}.
\end{align*}
Using that $bF_{n-1}F_{m-n}= b\cdot (F_{n-1}F_{m-n-2}+F_{n-1}F_{m-n-1})$, we obtain
\begin{align*}
H(m,n)& = cF_{n}F_{m-n+1}+(a+b)\cdot F_{n-1}F_{m-n-1}+dF_{n}F_{m-n}+bF_{n-1}F_{m-n-2}\\
& = F_{a+b}^{c}(m-1,n)+F_{b}^{d}(m-2,n) \ .
\end{align*}
Analogously, for $m-1<n$, considering $H_{2}^{1}$ and $H_{2}$, we get $$H(m,n)=F_{a+d}^{c}(n-1,m)+F_{b}^{d}(n-2,m),$$ which completes the proof. 
\end{proof}

\begin{multicols}{2}
\begin{center}
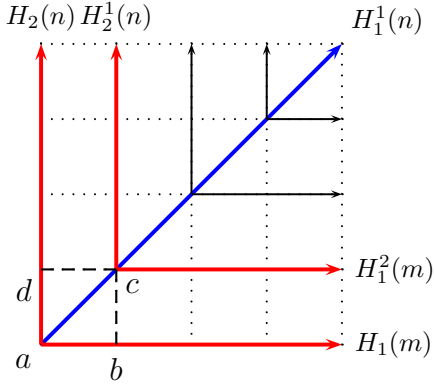

\begin{pspicture*}(-1,-0.5)(6,5)
\psgrid[subgriddiv=1,griddots=7,gridlabels=0pt](0,0)(4,4)
   \psline[linewidth=1.5pt,linecolor=blue]{->}(0,0)(4,4)
   \psline[linewidth=1.5pt,linecolor=red]{->}(0,0)(0,4)
   \psline[linewidth=1.5pt,linecolor=red]{->}(0,0)(4,0)
   \psline[linewidth=1.5pt,linecolor=red]{->}(1,1)(4,1)
   \psline[linewidth=1.5pt,linecolor=red]{->}(1,1)(1,4)
   \psline[linewidth=0.75pt]{->}(2,2)(4,2)
   \psline[linewidth=0.75pt]{->}(3,3)(4,3)
   \psline[linewidth=0.75pt]{->}(2,2)(2,4)
   \psline[linewidth=0.75pt]{->}(3,3)(3,4)
       \uput[0](4,0){\footnotesize $H_{1}(m)$}
       \uput[225](0,0){$a$}
       \uput[270](1,0){$b$}
  \psline[linestyle=dashed](0,1)(1,1)
       \uput[0](4,1){\footnotesize $H_{1}^2(m)$}
       \uput[225](0,1){$d$}
       \uput[-45](1,1){$c$}
  \psline[linestyle=dashed](1,0)(1,1)
      \uput[90](0,4){\footnotesize $H_{2}(n)$}
      \uput[90](1,4){\footnotesize $H_{2}^{1}(n)$}
      \uput[45](4,4){\footnotesize $H_{1}^{1}(n)$}
    \label{figura4}
\end{pspicture*}
\captionof{figure}{Graphical representation of Proposition \ref{HFiboab}}
\end{center}

\begin{center}
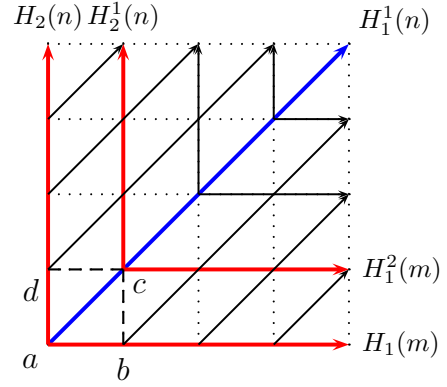

\begin{pspicture*}(-1,-0.5)(6,5)
\psgrid[subgriddiv=1,griddots=7,gridlabels=0pt](0,0)(4,4)
 \psline[linewidth=1.5pt,linecolor=blue]{->}(0,0)(4,4)
 \psline[linewidth=1.5pt,linecolor=red]{->}(0,0)(0,4)
 \psline[linewidth=1.5pt,linecolor=red]{->}(0,0)(4,0)
 \psline[linewidth=1.5pt,linecolor=red]{->}(1,1)(4,1)
 \psline[linewidth=1.5pt,linecolor=red]{->}(1,1)(1,4)
 \psline[linewidth=0.75pt]{->}(1,0)(4,3)
 \psline[linewidth=0.75pt]{->}(2,0)(4,2)
 \psline[linewidth=0.75pt]{->}(3,0)(4,1)
 \psline[linewidth=0.75pt]{->}(0,1)(3,4)
 \psline[linewidth=0.75pt]{->}(0,2)(2,4)
 \psline[linewidth=0.75pt]{->}(0,3)(1,4)
 \psline[linewidth=0.75pt]{->}(2,2)(4,2)
 \psline[linewidth=0.75pt]{->}(3,3)(4,3)
 \psline[linewidth=0.75pt]{->}(2,2)(2,4)
 \psline[linewidth=0.75pt]{->}(3,3)(3,4)
    \uput[0](4,0){\footnotesize $H_{1}(m)$}
    \uput[225](0,0){$a$}
    \uput[270](1,0){$b$}
 \psline[linestyle=dashed](0,1)(1,1)
    \uput[0](4,1){\footnotesize $H_{1}^2(m)$}
    \uput[225](0,1){$d$}
    \uput[-45](1,1){$c$}
 \psline[linestyle=dashed](1,0)(1,1)
    \uput[90](0,4){\footnotesize $H_{2}(n)$}
    \uput[90](1,4){\footnotesize $H_{2}^{1}(n)$}
    \uput[45](4,4){\footnotesize $H_{1}^{1}(n)$}
\end{pspicture*}
\captionof{figure}{Combination of Proposition \ref{HFiboab} and Definition \ref{spinfunctiondef}}
\end{center}
\end{multicols}

Now, we return our attention to sums of double-recurrence Fibonacci numbers. But first, we recall an interesting identity for Generalized Fibonacci Numbers \cite{wall}, giving an alternative proof for it.

\begin{proposition}
Let $(G_n)_n$ be a {\it GFS}, where $G_{n}=G_{n-1}+ G_{n-2}$ with initial conditions $G_{0}=g_{0} $ and $G_{1}=g_{1}$. Then 
\begin{equation}
\sum_{i=1}^{n}iG_{i}=nG_{n+2}-G_{n+3}+G_{3}
\end{equation}
\label{sumg}
\end{proposition}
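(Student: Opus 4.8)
The plan is to prove the identity by induction on $n$, which is the most natural route given that both the summand $iG_i$ and the right-hand side $nG_{n+2}-G_{n+3}+G_3$ are simple expressions and the Fibonacci-type recurrence $G_{n}=G_{n-1}+G_{n-2}$ makes the inductive step a routine telescoping computation. First I would check the base case $n=1$: the left-hand side is $1\cdot G_1 = G_1$, and the right-hand side is $1\cdot G_3 - G_4 + G_3 = 2G_3 - G_4$; since $G_4 = G_3 + G_2$ and $G_3 = G_2 + G_1$, this reduces to $2G_3 - G_3 - G_2 = G_3 - G_2 = G_1$, so the base case holds.

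For the inductive step, assume $\sum_{i=1}^{n} iG_i = nG_{n+2} - G_{n+3} + G_3$. Then
\[
\sum_{i=1}^{n+1} iG_i = nG_{n+2} - G_{n+3} + G_3 + (n+1)G_{n+1},
\]
and I would manipulate the right-hand side using $G_{n+3} = G_{n+2} + G_{n+1}$ and $G_{n+4} = G_{n+3} + G_{n+2}$ to rewrite it as $(n+1)G_{n+3} - G_{n+4} + G_3$. Indeed, collecting the $G_{n+1}$ and $G_{n+2}$ terms and substituting the recurrence relations should make everything match; this is the kind of short algebraic verification that the paper elsewhere dispatches with "can be easily proved by induction," so I would present it compactly rather than belabor it.

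An alternative (and perhaps cleaner) approach, consistent with the paper's stated goal of giving "an alternative proof," would be to invoke Theorem~\ref{GFS} to write each $G_i$ in terms of Fibonacci numbers, reducing the identity to the two classical sums $\sum i F_i$ and $\sum i F_{i-1}$ (equivalently Abel summation against the known $\sum F_i = F_{n+2}-1$), and then reassemble. If I take this route, the main obstacle is purely bookkeeping: keeping the index shifts in Theorem~\ref{GFS} straight and correctly handling the $i=1,2$ boundary terms where the Fibonacci reindexing $G_{i+2}=bF_{i+1}+aF_i$ must be aligned with the summation range starting at $i=1$. In either version, there is no conceptual difficulty — the only place to be careful is the edge terms and the exact form of the constant $G_3$ that survives on the right-hand side.
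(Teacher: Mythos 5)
Your induction proof is correct and goes through exactly as you sketch it: the base case computation $2G_3-G_4=G_1$ is right, and in the inductive step one checks $(n+1)G_{n+3}-G_{n+4}=(n+1)(G_{n+2}+G_{n+1})-(G_{n+3}+G_{n+2})=nG_{n+2}+(n+1)G_{n+1}-G_{n+3}$, which is precisely the induction hypothesis plus the new term $(n+1)G_{n+1}$. However, this is not the route the paper takes. The paper's proof is the one you describe as your "alternative": it applies Theorem~\ref{GFS} to write $G_i=g_0F_{i-1}+g_1F_i$, splits the sum into $g_0\sum iF_{i-1}+g_1\sum iF_i$, invokes the classical identity $\sum_{i=1}^{n}iF_i=nF_{n+2}-F_{n+3}+2$, and reassembles the Fibonacci combinations back into $nG_{n+2}-G_{n+3}+G_3$ (noting $G_3=2g_0+g_1$). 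The trade-off: your induction is shorter, fully self-contained, and avoids the index bookkeeping you correctly flag as the only delicate point of the reduction; the paper's argument, on the other hand, is deliberately chosen to showcase the technique of reducing GFS statements to known Fibonacci sums via Theorem~\ref{GFS}, which is the same mechanism reused immediately afterwards in Proposition~\ref{sumdoublefib}, and it explains structurally why the constant that survives is exactly $G_3$. Either proof is acceptable; had you carried out your second route in detail, you would have reproduced the paper's argument essentially verbatim.
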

\begin{proof}
Straightforward from Theorem \ref{GFS}, we have $G_{n}=g_{0}F_{n-1}+g_{1}F_{n}$. Thus,
\begin{align} 
\sum_{i=1}^{n}iG_{i} & = g_{0}\sum_{i=1}^{n}iF_{i-1}+g_{1}\sum_{i=1}^{n}iF_{i}  \nonumber\\ 
                     & = g_{0}\sum_{i=0}^{n-1}(i+1)F_{i}+g_{1}\sum_{i=1}^{n}iF_{i} \label{eq1} \\
                     & = g_{0}( (n-1)F_{n+1}-F_{n+2}+2+F_{n+1}-1) + g_{1}(nF_{n+2}-F_{n+3}+2) \label{eq2}  \\ 
                     & =  n(g_{0}F_{n+1}+g_{1}F_{n+2})-(g_{0}F_{n+2}+g_{1}F_{n+3})+2g_{0}+g_{1}  \nonumber\\
                     & =  nG_{n+2}-G_{n+3}+G_{3}
                     \nonumber
\end{align}
Where, from (\ref{eq1}) to (\ref{eq2}), the identity  $\sum_{i=1}^{n}iF_{i}=nF_{n+2}-F_{n+3}+2$,  \cite[p.16, Ex.10]{vorobiov}, is used.
\end{proof}

The following proposition, consists of a closed form to calculate the sums of Double-Fibonacci numbers, where the indices are in $\{1, \ldots , m\}^2$. 

\begin{proposition} \label{sumdoublefib}
Let $F(i,j)$ be Double-Fibonacci Numbers, where  $i,j \in  \{0,1,\ldots ,m\}$. Then,
\renewcommand{\labelenumi}{\roman{enumi}.}
\begin{enumerate}
\item{ The sum of all Double-Fibonacci Numbers with indices below the main diagonal, including it, is given by
\renewcommand{\thefootnote}{\fnsymbol{footnote}}
\begin{equation} \label{eqsum1}
\sum_{i,j=0\atop j\leq i}^{m}F(i,j)=\frac{2}{5}\left( mL_{m+3}-L_{m+4}+2F_{m+2} \right) +2. 
\end{equation}}

\item{The sum of all Double-Fibonacci Numbers, with indices on the square $m \times m$, is
\begin{equation*}
\sum_{i,j=0}^{m}F(i,j)=\frac{4}{5}\left( mL_{m+3}-L_{m+4}+2F_{m+2} \right)-F_{m+2} +5. 
\end{equation*}}
\end{enumerate}
\end{proposition}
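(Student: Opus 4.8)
The plan is to reduce everything to the closed form from Proposition~\ref{fffibo}, namely $F(i,j)=F_kF_{|i-j|+2}+F_{k-1}F_{|i-j|}$ with $k=\min\{i,j\}$, and then to organize the double sum along antidiagonals (fixed values of $|i-j|$) where this expression is constant in the relevant way. For part~(i), I would first split the region $\{0\le j\le i\le m\}$ by the value $d:=i-j\ge 0$. Along the $d$-th antidiagonal, $i$ ranges over $d,d+1,\dots,m$, so $j=i-d$ runs from $0$ to $m-d$, and the summand is $F_jF_{d+2}+F_{j-1}F_d$; summing over $j$ and using $\sum_{j=0}^{N}F_j=F_{N+2}-1$ (and its shift) collapses each antidiagonal to a linear combination of $F_{d+2}$ and $F_d$ with coefficients of the form $F_{m-d+2}-1$. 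Then I sum over $d=0,\dots,m$. The resulting double sum is a combination of $\sum_d F_{m-d+2}F_{d+2}$, $\sum_d F_{m-d+2}F_d$, $\sum_d F_{d+2}$, and $\sum_d F_d$; the convolution sums $\sum_{d} F_{a-d}F_d$ are standard and evaluate to expressions of the shape $\frac{1}{5}(\text{const}\cdot L)+(\text{linear in }F)$, which is exactly where the $\frac{2}{5}$ and the Lucas numbers enter. Collecting terms and using $L_n=F_{n-1}+F_{n+1}$ and $L_n+L_{n+2}=\cdots$ type identities should yield $\frac{2}{5}(mL_{m+3}-L_{m+4}+2F_{m+2})+2$.

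A cleaner route for part~(i), which I would actually prefer, is to use the symmetry $F(i,j)=F(j,i)$ together with the already-proven sum identities for one-variable generalized Fibonacci sequences. Fix a column $j$ (with $0\le j\le m$) and sum $F(i,j)$ over $i=j,\dots,m$: along that column the values $F(j,j),F(j+1,j),F(j+2,j),\dots$ satisfy the Fibonacci recurrence in $i$ (by~(\ref{dfibo})), so $(F(i,j))_{i\ge j}$ is a GFS with known initial terms $F(j,j)$ and $F(j+1,j)$, both of which are themselves expressible via Proposition~\ref{fffibo} ($F(j,j)=F_jF_2+F_{j-1}F_0=F_j$ and $F(j+1,j)=F_jF_3+F_{j-1}F_1=2F_j+F_{j-1}=F_j+F_{j+1}=F_{j+2}$). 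Hence each column-sum is computed by the standard partial-sum formula $\sum_{k\le N}G_k=G_{N+2}-G_2$ for GFS, giving a closed form in Fibonacci numbers with argument $j$; summing these over $j=0,\dots,m$ leaves a sum of the form $\sum_j(\text{Fibonacci terms in }j)$, again producing Lucas numbers through $F_j+F_{j+2k}$ combinations and the identity $\sum iF_i=nF_{n+2}-F_{n+3}+2$ used in Proposition~\ref{sumg}. The weighted sum $\sum_j jF_j$ is what creates the $mL_{m+3}$ term.

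For part~(ii), I would not redo the work: the square $\{0,\dots,m\}^2$ is the union of the lower-triangular region (indices $j\le i$) and the strictly-upper-triangular region ($j>i$), and by the symmetry $F(i,j)=F(j,i)$ the strictly-upper sum equals the lower-triangular sum minus the diagonal sum $\sum_{i=0}^m F(i,i)=\sum_{i=0}^m F_i=F_{m+2}-1$. Therefore
\[
\sum_{i,j=0}^{m}F(i,j)=2\sum_{j\le i}F(i,j)-\sum_{i=0}^{m}F(i,i)=2\left(\tfrac{2}{5}(mL_{m+3}-L_{m+4}+2F_{m+2})+2\right)-(F_{m+2}-1),
\]
and the right side simplifies to $\tfrac{4}{5}(mL_{m+3}-L_{m+4}+2F_{m+2})-F_{m+2}+5$, as claimed. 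So part~(ii) is a one-line corollary of part~(i) plus $\sum_{i=0}^m F_i=F_{m+2}-1$.

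The main obstacle is purely the bookkeeping in part~(i): translating the Fibonacci-only column-sums into the Lucas-number expression. The delicate points are (a) handling the boundary terms $F_{-1},F_0,F_2$ correctly (the identity $\sum_{i=1}^n iF_i = nF_{n+2}-F_{n+3}+2$ has a nonzero constant, and shifting indices as in the proof of Proposition~\ref{sumg} must be done carefully), and (b) recognizing the combination $2F_j+3F_{j+2}$-type expressions as Lucas numbers so that the final answer consolidates into $mL_{m+3}-L_{m+4}$ rather than a messier sum of Fibonacci terms; the identities $L_n=2F_{n+1}-F_n$ and $\sum_{j=0}^m L_j = L_{m+2}-1$, together with $\sum_{j=0}^m jL_j = mL_{m+2}-L_{m+3}+3$, are the ones I expect to need. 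Everything else is routine induction or direct summation.
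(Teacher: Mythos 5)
Your part (ii) is exactly the paper's argument: double the triangular sum and subtract the diagonal $\sum_{i=0}^m F(i,i)=\sum_{i=0}^m F_i=F_{m+2}-1$; your arithmetic checks out. For part (i) your plan is sound but organized differently from the paper. You decompose the triangular region along antidiagonals $d=i-j$ (or, in your preferred variant, along rows below the diagonal viewed as GFS's with initial terms $F_j$ and $F_{j+2}$), collapse each piece with $\sum_{j=0}^{N}F_j=F_{N+2}-1$, and finish with Fibonacci convolution sums. The paper instead keeps the inner sums $\sum_{j=0}^{i}$, splits $F_{i-j+2}=F_{i-j+1}+F_{i-j}$ and uses the addition formula $F_i=F_jF_{i-j+1}+F_{j-1}F_{i-j}$ to reduce the inner sum to $(i+1)F_i+\sum_{j=0}^{i}F_jF_{i-j}$, then invokes the convolution identity $\sum_{j=0}^{i}F_jF_{i-j}=(iL_i-F_i)/5$ (sequence \seqnum{A001629}), the weighted-sum lemma of Proposition \ref{sumg}, and $L_{n-1}+L_{n+1}=5F_n$. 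Both routes rest on essentially the same convolution identity, so the difference is bookkeeping: the paper's split isolates one clean convolution per value of $i$ and pushes the index weighting into Proposition \ref{sumg}, while yours defers everything to shifted convolutions such as $\sum_d F_{d+2}F_{m-d+2}$ and $\sum_d F_dF_{m-d+1}$, which need boundary corrections before the standard identity applies.

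Two small corrections to your sketch, neither fatal. First, the row recurrence $F(i,j)=F(i-1,j)+F(i-2,j)$ for fixed $j\le i-2$ does not follow directly from (\ref{dfibo}), which runs along diagonals; it follows from the closed form of Proposition \ref{fffibo} (or needs a separate induction using the initial conditions). Second, in your column-sum route the outer sum consists of convolution-type terms $F_j\,F_{m-j+\mathrm{const}}$, not weighted sums $\sum_j jF_j$ or $\sum_j jL_j$; those weighted-sum identities are what the paper's organization requires, whereas your route needs the convolution identity you already cited in the antidiagonal version — that is where the $mL_{m+3}$ term actually comes from in your setup. The final consolidation into $\frac{2}{5}\left(mL_{m+3}-L_{m+4}+2F_{m+2}\right)+2$ is asserted rather than carried out, but it is routine given the identities you name.
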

\begin{proof}
First, we proceed to prove (i), and use it to prove (ii). Rewriting (\ref{eqsum1}), and using the closed form on Proposition \ref{fffibo}, we have
\begin{align*}
\sum_{i,j=0\atop i\geq j}^{m}F(i,j)&=\sum_{i=0}^{m}\sum_{j=0}^{i}F_{j}F_{i-j+2}+F_{j-1}F_{i-j}\\
                                   &=\sum_{i=0}^{m}\sum_{j=0}^{i} F_{j}F_{i-j+1}+F_{j-1}F_{i-j}+F_{j}F_{i-j}, \\
\intertext{and since $F_{i}=F_{j}F_{i-j+1}+F_{j-1}F_{i-j}$, it follows,} 
                                   &=\sum_{i=0}^{m}\sum_{j=0}^{i} F_{i}+F_{j}F_{i-j}\\
                                   &=\sum_{i=0}^{m}\left(\left(i+1 \right)F_{i}+ \sum_{j=0}^{i}F_{i-j}F_{j} \right) . \\
\intertext{Now, we observe that the sum $\sum_{j=0}^{i}F_{i-j}F_{j}$, is referenced as sequence \seqnum{A001629} on \cite{oeis}, where is established that it is equal to  $((i-1)F_{i}+2iF_{i-1})/5=(iL_{i}-F_{i})/5$, $(L_n)_{n\geq 0}$ being the Lucas Sequence, and the last equality follows from \cite[Eq.\ 32.13, p.\ 375]{kos-f}. Thus,}
\sum_{i,j=0\atop i\geq j}^{m}F(i,j) &=\sum_{i=0}^{m}(i+1)F_{i}+\sum_{i=0}^{m}\frac{iL_{i}-F_{i}}{5} . \\
\intertext{From Proposition \ref{sumg} and  $\sum_{i=1}^{n}F_{i}=F_{n+2}-1$, we have, }
                                   &=m\left(\frac{L_{m+2}}{5}+F_{m+2}\right) -\left(\frac{L_{m+3}}{5}+F_{m+3}\right)  +\frac{4}{5}F_{m+2}+2,
\intertext{then, finally by $L_{n-1}+L_{n+1}=5F_{n}$ (see \cite[Cor.\ 5.5, p.\ 80]{kos-f}), it follows that} 
                                   &=\frac{m\left( L_{m+2}+L_{m+1}+L_{m+3}\right)}{5} -\frac{\left( L_{m+4}+L_{m+2}+L_{m+3}\right)}{5}\\
                                  & \quad + \ \frac{4}{5}F_{m+2}+2 \\
\therefore \  \sum_{i,j=0\atop i\geq j}^{m}F(i,j)&=\frac{2}{5}\left( mL_{m+3}-L_{m+4}+2F_{m+2} \right) +2,
\end{align*}
completing the proof for (i). For (ii), we use the symmetry satisfied by double-recurrence Fibonacci Numbers, $F(m,n)=F(n,m)$, giving us that the sum on (ii) is two times the sum on (i), minus the sum for indices on the main diagonal:
\begin{align*}
\sum_{i,j=0}^{m}F(i,j)&=2\sum_{i,j=0\atop i\leq j}^{m}F(i,j)-\sum_{i=0}^{m}F(i,i)\\
                      &=\frac{4}{5}\left( mL_{m+3}-L_{m+4}+2F_{m+2} \right) +4 -\sum_{i=0}^{m}F_{i}\\
                      &=\frac{4}{5}\left( mL_{m+3}-L_{m+4}+2F_{m+2} \right)-F_{m+2}+5 .
\end{align*}
\end{proof}

Out of curiosity, equation (\ref{eqsum1}) happens to be the same formula for the path length of the Fibonacci tree of order $n$. (\seqnum{A178523} of \cite{oeis})

\section{Acknowledgements}
During the preparation of this paper, Ana Paula Chaves was  supported in part by CNPq Universal 01/2016 - 427722/2016-0 grant, and Carlos Alirio Rico Acevedo was fully supported by a Masters Scholarship from CNPq.

\bibliographystyle{amsplain}
\bibliography{Chaves-Acevedobi}

\bigskip
\hrule
\bigskip

\noindent 2010 {\it Mathematics Subject Classification}:
Primary 11B39; Secondary 11J86.

\noindent \emph{Keywords:}
Fibonacci numbers, double-recurrence sequence, closed form.

\bigskip
\hrule
\bigskip

\noindent (Concerned with sequences \seqnum{A001629}, \seqnum{A002940}, \seqnum{A006478}, \seqnum{A010049},  \seqnum{A014286}, \seqnum{A122491},  \seqnum{A178523}, \seqnum{A190062},)

\bigskip
\hrule
\bigskip

\section*{Appendix}
The following table explicit some interesting sequences founded on \cite{oeis}, that can be obtained from the sum of the terms of $H(i,j)$, with initial conditions $a,b,c$ and $d$, considering \linebreak $0 \leq j < i \leq n$, $0 \leq i \leq j \leq n$, and all $i,j \in \{0,1, \ldots , n\}^2$.

\begin{table}
\centering
 \begin{tabular}{|c|c|c|c|}
 \hline
\multicolumn{1}{|p{2cm}|}{\centering ~ \\ Initial Condition $[a,b,c,d]$}& \multicolumn{1}{p{3.3cm}|}{$$\sum_{i=1}^{n}\sum_{j=0}^{i-1}H(i,j)$$}
&\multicolumn{1}{p{3.3cm}|}{$$\sum_{j=0}^{n}\sum_{i=0}^{j}H(i,j)$$} &\multicolumn{1}{p{3.3cm}|}{$$\sum_{i,j=0}^{m}H(i,j)$$}\tabularnewline
 \hline
 $[0,0,0,1]$ &\seqnum{A006478}$(n)$&\seqnum{A001629}$(n+1)$&\seqnum{A006478}$(n+1)$\\
  \hline
 $[0,0,1,0]$ &\seqnum{A002940}$(n-2)$&\seqnum{A006478}$(n+1)$& -  \\
  \hline
 $[0,0,1,1]$ & - & \seqnum{A122491}$(n+2)$& - \\
  \hline
 $[0,1,0,0]$ & \seqnum{A001629}$(n+1)$ & \seqnum{A006478}$(n)$& \seqnum{A006478}$(n+1)$ \\
 \hline
  $[0,1,0,1]$ & \seqnum{A006478}$(n+1)$ & \seqnum{A006478}$(n+1)$& \seqnum{A178523}$(n+1)$ \\
   \hline
  $[0,1,1,0]$ &\seqnum{A014286}$(n)$ &  \seqnum{A002940}$(n-1)$& - \\
\hline
  $[0,1,1,1]$ & - & \seqnum{A178523}$(n+1)$ & -  \\
  \hline
  $[1,0,0,0]$ & \seqnum{A001629}$(n)$ & \seqnum{A010049}$(n+1)$ & - \\
   \hline
  $[1,0,0,1]$ &\seqnum{A122491}$(n+1)$ & \seqnum{A001629}$(n+2)$ & - \\
   \hline
  $[1,0,1,0]$ &\seqnum{A178523}$(n)$ & - & - \\
   \hline
  $[1,0,1,1]$ & - &\seqnum{A006478}$(n+2)$& - \\
  \hline
  $[1,1,0,0]$ &\seqnum{A006478}$(n+1)$&\seqnum{A190062}$(n+1)$ & -  \\
   \hline
  $[1,1,1,0]$ &\seqnum{A002940}$(n-1)$ & - & -  \\
  \hline
  $[1,1,1,1]$ & - &\seqnum{A014286}$(n+11)$ & -   \\
  \hline
 \end{tabular}
 \caption{Related sequences.}
 \end{table}

\end{document}